\newtheorem{theorem}{Theorem}[section]
\newtheorem{lemma}[theorem]{Lemma}
\newtheorem{corollary}[theorem]{Corollary}
\theoremstyle{definition}
\theoremstyle{remark}
\newtheorem{remark}[theorem]{Remark}
\numberwithin{equation}{section}
\begin{document}
	\title[Cliques of orders three and four in the Paley-type graphs]
	{Cliques of orders three and four in the Paley-type graphs}
	
	%    Only \author and \address are required; other information is
	%    optional.  Remove any unused author tags.
	
	%    author one information
	%\author[short version for running head]{}
	\author{Anwita Bhowmik}
	\address{Department of Mathematics, Indian Institute of Technology Guwahati, North Guwahati, Guwahati-781039, Assam, INDIA}
	% \curraddr{}
	\email{anwita@iitg.ac.in}
	\author{Rupam Barman}
	\address{Department of Mathematics, Indian Institute of Technology Guwahati, North Guwahati, Guwahati-781039, Assam, INDIA}
	% \curraddr{}
	\email{rupam@iitg.ac.in}
	%\thanks{ }
	
	%    author two information

	%    \subjclass is required.
	\subjclass[2020]{05C25; 05C30; 11T24; 11T30}
	\date{January 18, 2023}
	\keywords{Paley graphs; Paley-type graphs; clique; finite fields; quadratic residue}
	%    Abstract is required.
	\begin{abstract}
		Let $n=2^s p_{1}^{\alpha_{1}}\cdots p_{k}^{\alpha_{k}}$, where $s=0$ or $1$, $\alpha_i\geq 1$, and the distinct primes $p_i$ satisfy $p_i\equiv 1\pmod{4}$ for all $i=1, \ldots, k$. Let $\mathbb{Z}_n^\ast$ denote the group of units in the commutative ring $\mathbb{Z}_n$. Recently, we defined a Paley-type graph $G_n$ of order $n$ as the graph whose vertex set is $\mathbb{Z}_n$ and $xy$ is an edge if $x-y\equiv a^2\pmod n$ for some $a\in\mathbb{Z}_n^\ast$. The Paley-type graph $G_n$ resembles the classical Paley graph in a number of ways, and adds to the list of generalizations of the Paley graph. Computing the number of cliques of a particular order in a Paley graph or its generalizations has been of considerable interest. For primes $p\equiv 1\pmod 4$ and $\alpha\geq 1$, by evaluating certain character sums, we found the number of cliques of order $3$ in $G_{p^\alpha}$ and expressed the number of cliques of order $4$ in $G_{p^\alpha}$ in terms of Jacobi sums. In this article we give combinatorial proofs and find the number of cliques of orders $3$ and $4$ in $G_n$ for all $n$ for which the graph is defined.
	\end{abstract}
	\maketitle
	\section{introduction and statements of results}
	The classical Paley graphs are a well-known infinite family of self complementary and symmetric graphs. Named after Raymond Paley, they were introduced as graphs independently by Sachs \cite{sachs} in 1962, and Erd\H{o}s and R\'enyi \cite{erdos} in 1963. Let $q$ be a prime power such that $q\equiv 1\pmod 4$ and let $\mathbb{F}_q$ denote the finite field with $q$ elements. The Paley graph of order $q$ is the graph with vertex set $\mathbb{F}_q$, where $ab$ is an edge if $a-b$ is a nonzero square in $\mathbb{F}_q$. Paley graphs are a special class of Cayley graphs. They are connected and strongly regular with parameters $(q, (q-1)/2, (q-5)/4, (q-1)/4)$. These graphs serve as a nice connection between graph theory and number theory and thus have generated a lot of interest in studying their properties, see for example \cite{ananchuen1993adjacency,chung}. 
	%Please see \cite{jones2017paley} for a detailed %survey on Paley graphs.
	\par Owing to their nice structure, Paley graphs have also been generalized in a number of ways. One such way is as follows. The vertex set of the graph in each case is $\mathbb{F}_q$ where $q$ is a prime power with some appropriate restrictions on $q$, depending on making the graph well-defined. The cubic Paley graph is such that $xy$ is an edge if $x-y$ is a nonzero cube in $\mathbb{F}_q$, while $xy$ is an edge in the quadruple Paley graph if $x-y$ is a nonzero fourth power in $\mathbb{F}_q$ (see \cite{ananchuen2006cubic} for more details). Further, Lim and Praeger \cite{lim2006generalised} defined the generalized $k$-Paley graph $G_k(q)$ for an integer $k\geq 2$, where $xy$ is an edge if $x-y$ is a nonzero $k$-th power in $\mathbb{F}_q$. Wage \cite{wage2006character} defined three graphs on $\mathbb{F}_p$, $p$ being a prime, by fixing some $t$ in $\mathbb{F}_p$: the undirected graphs $G_t(p)$ and $G_t'(p)$ are respectively such that $xy$ is an edge if $x-ty$ and $y-tx$ are both nonzero squares, and if one of them is a nonzero square. The directed graph $H_t(p)$ has an edge $x\rightarrow y$ if both $x$ and $x-ty$ are nonzero squares. Very recently, Reis \cite{reis} introduced another approach of generalization by studing a multiplicative-additive analogue arising from vector spaces over finite fields.
	\par  In \cite{BB}, we introduced a Paley-type graph which is defined as follows. Let $n=2^s p_{1}^{\alpha_{1}}\cdots p_{k}^{\alpha_{k}}$, where $s=0$ or $1$, $\alpha_i\geq 1$, and the distinct primes $p_i$ satisfy $p_i\equiv 1\pmod{4}$ for all $i=1, \ldots, k$. Let $\mathbb{Z}_n^\ast$ denote the group of units in the commutative ring $\mathbb{Z}_n$. Then, the Paley-type graph $G_n$ of order $n$ is the graph whose vertex set is $\mathbb{Z}_n$ and $ab$ is an edge if $a-b$ is a square in $\mathbb{Z}_n^\ast$. In \cite{angsu}, Das introduced Paley-type graphs $\Gamma_N$ modulo $N=pq$, where $p$ and $q$ are distinct primes satisfying $p\equiv q\equiv 1\pmod{4}$. The graph $\Gamma_N$ is a special case of the Paley-type graphs $G_n$, and it turns out to be $G_{pq}$. 
	\par The study of finding the number of cliques in Paley graphs and their generalizations has been of considerable interest. For a graph $G$, let $\mathcal{K}_n(G)$ denote the number of cliques of order $n$ in $G$. Evans et al. in \cite{evans1981number} gave a simple closed formula to calculate $\mathcal{K}_4(G(p))$, where $G(p)$ is the Paley graph of prime order $p$. In \cite{dawsey}, Dawsey and McCarthy computed $\mathcal{K}_4(G_k(q))$ in Lim and Praeger's graph $G_k(q)$ using finite field hypergeometric functions. In \cite{angsu}, Das gave a formula for $\mathcal{K}_3(G_{pq})$, where $p$ and $q$ are distinct primes satisfying $p\equiv q\equiv 1\pmod{4}$, and $G_{pq}$ is the Paley-type graph of order $pq$. For primes $p\equiv 1\pmod 4$ and $\alpha\geq 1$, in \cite{BB} we computed $\mathcal{K}_3(G_{p^\alpha})$ and $\mathcal{K}_4(G_{p^\alpha})$ for the Paley-type graph $G_{p^\alpha}$.
	\par The aim of this article is to find $\mathcal{K}_3(G_n)$ and $\mathcal{K}_4(G_n)$ for all $n$ for which the graph $G_n$ is defined. In \cite{BB}, by evaluating certain character sums, we found the number of cliques of order $3$ in $G_{p^\alpha}$ and expressed the number of cliques of order $4$ in $G_{p^\alpha}$ in terms of Jacobi sums. Here, we give combinatorial proofs to find $\mathcal{K}_3(G_n)$ and $\mathcal{K}_4(G_n)$ for any $n$. We omit the case when $n$ is even since there cannot exist cliques of order more than two in $G_n$ in that case. In the following theorem we find the number of cliques of order $3$ in $G_n$ for all odd $n$ for which the graph is defined. 
	\begin{theorem}\label{thm1}
	Let $n=p_{1}^{\alpha_{1}} \cdots p_{k}^{\alpha_{k}}$, where $\alpha_i\geq 1$ and the distinct primes $p_i$ satisfy $p_i\equiv 1\pmod{4}$ for all $i=1, \ldots, k$. Then, the number of cliques of order three in $G_n$ is given by 
	\begin{align*}
		\mathcal{K}_3(G_n)=\frac{1}{3\times 2^{3k+1}}\prod\limits_{i=1}^{k}\left[ p_i^{3\alpha_i-2}(p_i-1)(p_i-5)\right].
	\end{align*}	
	\end{theorem}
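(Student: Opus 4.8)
The plan is to exploit the multiplicative structure of $\mathbb{Z}_n$ via the Chinese Remainder Theorem (CRT) and to reduce the count in two stages: from $G_n$ down to the prime-power graphs $G_{p_i^{\alpha_i}}$, and then from each $G_{p_i^{\alpha_i}}$ down to the classical Paley graph $G_{p_i}$. First I would record the adjacency criterion componentwise. Under the isomorphism $\mathbb{Z}_n \cong \prod_{i=1}^{k}\mathbb{Z}_{p_i^{\alpha_i}}$, an element is a square in $\mathbb{Z}_n^\ast$ if and only if each of its components is a square in $\mathbb{Z}_{p_i^{\alpha_i}}^\ast$; and since $p_i\equiv 1\pmod 4$ forces $-1$ to be a square, adjacency is symmetric. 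Writing $a\leftrightarrow(a_1,\dots,a_k)$, we thus have $a\sim b$ in $G_n$ exactly when $a_i\sim b_i$ in $G_{p_i^{\alpha_i}}$ for every $i$. Because ``being a triangle'' is the conjunction of three adjacency conditions, each of which factors over the components, the number of ordered triples of distinct, pairwise-adjacent vertices of $G_n$ is the product of the corresponding numbers for the factors $G_{p_i^{\alpha_i}}$. Dividing by the $3!=6$ orderings of each unordered triangle yields the clean relation
\[
\mathcal{K}_3(G_n)=6^{\,k-1}\prod_{i=1}^{k}\mathcal{K}_3\big(G_{p_i^{\alpha_i}}\big).
\]

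Next I would compute $\mathcal{K}_3(G_{p^\alpha})$ for a single prime power by a \emph{blow-up} argument. Since a unit of $\mathbb{Z}_{p^\alpha}$ is a square precisely when its reduction modulo $p$ is a nonzero quadratic residue, two vertices of $G_{p^\alpha}$ are adjacent if and only if their reductions modulo $p$ are adjacent in the Paley graph $G_p$, while vertices sharing a residue class modulo $p$ are non-adjacent. Hence $G_{p^\alpha}$ is the graph obtained from $G_p$ by replacing each vertex with an independent set (a fibre) of size $p^{\alpha-1}$. A triangle must therefore use three vertices lying over a triangle of $G_p$, one from each of the three fibres, giving $\mathcal{K}_3(G_{p^\alpha})=(p^{\alpha-1})^3\,\mathcal{K}_3(G_p)$. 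Inserting the classical value $\mathcal{K}_3(G_p)=p(p-1)(p-5)/48$ — which follows from the strong regularity of $G_p$, each of its $p(p-1)/4$ edges lying in $\lambda=(p-5)/4$ triangles — produces $\mathcal{K}_3(G_{p^\alpha})=p^{3\alpha-2}(p-1)(p-5)/48$.

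Finally I would substitute this into the product relation from the first step and simplify the constant, using $6^{\,k-1}\cdot 48^{-k}=2^{k-1-4k}3^{k-1-k}=(3\cdot 2^{3k+1})^{-1}$, which reproduces exactly the claimed expression. The main obstacle I anticipate is not any single deep computation but the careful bookkeeping in the first step: one must check that both distinctness of the three product-vertices and their pairwise adjacency factor exactly over the CRT components, so that no degenerate triples are spuriously counted and none are lost, and then track the ordered-versus-unordered normalization responsible for the factor $6^{\,k-1}$ rather than $6^{-1}$ or $1$. A secondary point deserving care is the base case: if a fully self-contained combinatorial treatment is wanted in place of quoting the strong regularity of $G_p$, then the triangle-degree $\lambda=(p-5)/4$ of the Paley graph must itself be derived (equivalently, the prime-power value can be taken from \cite{BB}).
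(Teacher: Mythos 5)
Your proposal is correct, and it takes a genuinely different route from the paper. The paper first reduces via vertex-transitivity: it passes from $\mathcal{K}_3(G_n)$ to counting edges at the vertex $1$ in the subgraph $H_n$ induced by the squares (equation \eqref{eq3}), applies a CRT bijection to those edges, and then quotes the character-sum evaluation $\mathcal{K}_2(H_{p^{\alpha}},1)=p^{\alpha-1}(p-5)/4$ from \cite{BB}. You instead apply CRT directly to ordered triangles of the whole graph $G_n$ (no transitivity needed, since pairwise adjacency forces the differences to be units and hence componentwise distinctness is automatic), obtaining $\mathcal{K}_3(G_n)=6^{k-1}\prod_i\mathcal{K}_3(G_{p_i^{\alpha_i}})$; you then handle prime powers by observing that $G_{p^\alpha}$ is the blow-up of the Paley graph $G_p$ by independent fibres of size $p^{\alpha-1}$ (this is exactly what Lemmas \ref{lem1} and \ref{lem2} of the paper justify), and finish with the classical strong-regularity count $\mathcal{K}_3(G_p)=p(p-1)(p-5)/48$. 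What your route buys: it is more self-contained and elementary, needing only the two structural lemmas plus the standard Paley parameters $(p,(p-1)/2,(p-5)/4,(p-1)/4)$, and avoiding both the vertex-transitivity machinery and the external character-sum computation; moreover, your blow-up step is essentially the paper's own Lemma \ref{lem4} (which it reserves for the order-four count in Theorem \ref{thm2}), so your argument treats the two theorems more uniformly. What the paper's route buys: the anchored-clique formula \eqref{eq3} is set up once and reused verbatim for Theorem \ref{thm2}, and it directly leverages the already-published local computation in \cite{BB}. Your constant bookkeeping is also right: $6^{k-1}\cdot 48^{-k}=2^{-3k-1}3^{-1}$, matching the stated $\frac{1}{3\times 2^{3k+1}}$.
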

\begin{remark}
If we take $k=2$, $\alpha_1=\alpha_2=1$ and $p_1=5$ in Theorem \ref{thm1}, then we obtain Theorem 7 in \cite{angsu}.  Also, if we take $k=2$, $\alpha_1=\alpha_2=1$ in Theorem \ref{thm1}, then we obtain Theorem 12 in \cite{angsu}. 
\end{remark}
	In the following theorem, we provide a closed formula for the number of cliques of order four in $G_n$ for odd $n$. Note that if $p\equiv 1\pmod 4$ is a prime then there exist integers $a$ and $b$ such that $p=a^2+b^2$ where $a$ is even; $a^2$ and $b^2$ are unique.
	\begin{theorem}\label{thm2}
	Let $n=p_{1}^{\alpha_{1}} \cdots p_{k}^{\alpha_{k}}$, where $\alpha_i\geq 1$ and the distinct primes $p_i$ satisfy $p_i\equiv 1\pmod{4}$ for all $i=1, \ldots, k$. For $i\in\{1,\ldots, k\}$, let $p_i=a_i^2+b_i^2$ where $a_i,b_i$ are integers and $a_i$ is even. Then, the number of cliques of order four in $G_n$ is given by	
	\begin{align*}
		\mathcal{K}_4(G_n)=\dfrac{1}{3\times 8^{2k+1}}\prod\limits_{i=1}^k \left[p_i^{4\alpha_i-3}(p_i-1)\{(p_i-9)^2-4a_i^2\}\right].
	\end{align*}
	\end{theorem}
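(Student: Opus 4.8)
The plan is to reduce $\mathcal{K}_4(G_n)$ to the prime power case by a combinatorial argument resting on the Chinese Remainder Theorem. Under the ring isomorphism $\mathbb{Z}_n \cong \prod_{i=1}^{k}\mathbb{Z}_{p_i^{\alpha_i}}$ each vertex $x$ corresponds to the tuple $(x^{(1)},\ldots,x^{(k)})$ of its reductions, and $x \in \mathbb{Z}_n^{\ast}$ is a square exactly when every coordinate $x^{(i)}$ is a square in $\mathbb{Z}_{p_i^{\alpha_i}}^{\ast}$. Hence $uv$ is an edge of $G_n$ if and only if $u^{(i)}v^{(i)}$ is an edge of $G_{p_i^{\alpha_i}}$ for all $i$. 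I would record this coordinatewise description of adjacency first, as it underlies everything that follows.

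Next I would pass to ordered counts. Let $N_4(G)=4!\,\mathcal{K}_4(G)$ denote the number of ordered quadruples of distinct vertices spanning a clique. The crucial point is that in any clique the coordinates $v_1^{(i)},v_2^{(i)},v_3^{(i)},v_4^{(i)}$ are forced to be distinct for each fixed $i$: were two of them equal, the corresponding difference would vanish in the $i$-th factor and so fail to be a unit, contradicting the edge condition. Thus projection to the factors is a bijection between ordered clique quadruples of $G_n$ and $k$-tuples consisting of one ordered clique quadruple in each $G_{p_i^{\alpha_i}}$, the inverse being reconstruction by CRT (the reconstructed vertices are automatically distinct in $\mathbb{Z}_n$). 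This gives $N_4(G_n)=\prod_{i=1}^{k}N_4(G_{p_i^{\alpha_i}})$, equivalently
\[
\mathcal{K}_4(G_n)=24^{\,k-1}\prod_{i=1}^{k}\mathcal{K}_4\!\left(G_{p_i^{\alpha_i}}\right).
\]

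To supply the factors I would reduce the prime power case to the prime case. Since the reduction $\mathbb{Z}_{p^\alpha}^{\ast}\to\mathbb{F}_p^{\ast}$ has kernel of odd order $p^{\alpha-1}$, it induces an isomorphism on $2$-torsion, so $x$ is a square unit modulo $p^\alpha$ precisely when $x\bmod p$ is a nonzero square in $\mathbb{F}_p$. Consequently $G_{p^\alpha}$ is a balanced blow-up of the classical Paley graph $G_p=G(p)$ in which each vertex is replaced by an independent fibre of size $p^{\alpha-1}$; since a clique meets each fibre at most once, $\mathcal{K}_4(G_{p^\alpha})=p^{4(\alpha-1)}\mathcal{K}_4(G_p)$. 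Substituting the closed formula of Evans et al. \cite{evans1981number}, namely $\mathcal{K}_4(G_p)=\tfrac{1}{1536}\,p(p-1)\{(p-9)^2-4a^2\}$ with $p=a^2+b^2$ and $a$ even, and gathering the exponents $p_i^{4(\alpha_i-1)+1}=p_i^{4\alpha_i-3}$, produces the product in the statement.

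The remaining task is the arithmetic of the constant: one checks $24^{\,k-1}/1536^{\,k}=1/(3\cdot 2^{6k+3})=1/(3\cdot 8^{2k+1})$, exactly the prefactor claimed. I expect the combinatorial reduction to be clean once coordinatewise adjacency and the forced distinctness are in hand, so the genuine depth is isolated in the prime case we import: it is the evaluation of the quartic Jacobi sums that produces the delicate term $4a_i^2$ via $p_i=a_i^2+b_i^2$. Care is needed there with the parity normalization ($a_i$ even), which is what makes $a_i^2$ well-defined, and with matching the $2$-adic and $3$-adic parts of the constants contributed by the two reduction steps.
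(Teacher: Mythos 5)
Your proof is correct, and it takes a genuinely different route from the paper's. The paper localizes first: using vertex-transitivity of $G_n$ and of the subgraph $H_n$ induced on the squares $R_n$, it reduces the count to triangles through the vertex $1$ in $H_n$ via the reduction \eqref{eq3}, i.e.\ $\mathcal{K}_4(G_n)=\frac{n\phi(n)}{3\times 2^{k+2}}\,\mathcal{K}_3(H_n,1)$; it then factors those \emph{unordered} triangles by CRT, which is not a bijection but a $2^{k-1}$-to-one correspondence (this multiplicity bookkeeping is the delicate step in the paper's proof), and finally proves $\mathcal{K}_3(H_{p^\alpha},1)=p^{2\alpha-2}\,\mathcal{K}_3(H_p,1)$ by lifting triangles along fibres (Lemma \ref{lem4}), importing $\mathcal{K}_3(H_p,1)=\{(p-9)^2-4a^2\}/64$ from Proposition 4 of Evans--Pulham--Sheehan. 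You factor globally instead: passing to \emph{ordered} $4$-tuples makes the CRT correspondence an honest bijection (pairwise adjacency forces distinct coordinates, as you note), giving $\mathcal{K}_4(G_n)=24^{k-1}\prod_{i}\mathcal{K}_4(G_{p_i^{\alpha_i}})$ with no multiplicity correction, and you treat the prime-power case by viewing $G_{p^\alpha}$ as a balanced blow-up of $G_p$ with independent fibres of size $p^{\alpha-1}$, importing the global formula $\mathcal{K}_4(G_p)=p(p-1)\{(p-9)^2-4a^2\}/1536$ from the same reference. Both routes rest on the same two structural facts --- membership in $R_n$ is detected coordinate-wise (the paper's Lemma \ref{lem1}), and membership in $R_{p^\alpha}$ is detected mod $p$ (the paper's Lemma \ref{lem2}) --- but your odd-kernel/index-two argument for the latter is cleaner than the paper's proof by binomial expansion and $p$-adic valuations, and your ordered-count device eliminates both the vertex-transitivity reductions and the $2^{k-1}$ preimage count. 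What the paper's localization buys in exchange is a single uniform skeleton, \eqref{eq3}, that handles $\mathcal{K}_3$ and $\mathcal{K}_4$ simultaneously and reuses the character-sum computations of their earlier work. Your arithmetic is right: $24^{k-1}/1536^{k}=1/(3\cdot 2^{6k+3})=1/(3\cdot 8^{2k+1})$, matching the stated prefactor.
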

We find the values of $\mathcal{K}_3(G_n)$ and $\mathcal{K}_4(G_n)$ for some specific values of $n$ by using Python which are listed in Table \ref{Table-2}.
\begin{table}[ht]
	\begin{center}
\begin{tabular}{|c|c|c|}
	\hline
	$n$ & $\mathcal{K}_3(G_n)$&$\mathcal{K}_4(G_n)$\\
	\hline
	%$13$ & $26$&$0$\\
	$13^2$ & $57122$&$0$\\
	%$17$ & $68$&$0$\\
	$17^2$ & $334084$&$0$\\
	$13^2\times 17$ & $23305776$&$0$\\
	$29^2$ & $9901934$&$143578043$\\
	$29\times 37$ & $2163168$&$2703960$\\
	\hline
\end{tabular}
\caption{Values of $\mathcal{K}_3(G_n)$ and $\mathcal{K}_4(G_n)$}
\label{Table-2}
\end{center}
\end{table}
\par For the primes $p=13, 17, 29, 37$, we have $p=a^2+b^2$, where $a^2=4, 16, 4, 36$, respectively. By putting these values in Theorem \ref{thm1} and Theorem \ref{thm2}, we obtain the same values of $\mathcal{K}_3(G_n)$ and $\mathcal{K}_4(G_n)$ as listed in Table \ref{Table-2}.
\par In \cite{BB}, we computed $\mathcal{K}_4(G_{p^\alpha})$ using character sums. We state the result below for completeness.
\begin{theorem}[Theorem 1.2 in \cite{BB}]\label{thm3}
	Let $p$ be a prime such that $p\equiv 1\pmod{4}$. Let $\alpha\geq 1$ be an integer and let $n=p^\alpha$. Let $\chi$ denote the unique quadratic Dirichlet character mod $n$ and let $\psi$ be a Dirichlet character mod $n$ of order $4$. Let $J(\psi,\chi)=\sum\limits_{x\in\mathbb{Z}_n}\psi(x)\chi(1-x)$ be the Jacobi sum of $\psi$ and $\chi$. Then,
	$$\mathcal{K}_4(G_n)=\dfrac{p^{2\alpha-1}(p-1)[p^{2\alpha-2}\left\lbrace (p-9)^2-2p \right\rbrace+J(\psi,\chi)^2+\overline{J(\psi,\chi)}^2]}{1536}.$$
\end{theorem}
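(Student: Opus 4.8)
The plan is to count ordered $4$-cliques with a quadratic character sum and then divide by $4!$. Let $\chi$ be the quadratic character modulo $n=p^\alpha$ and $\chi_0$ the principal character. For a unit $u$ the quantity $\tfrac12\bigl(\chi_0(u)+\chi(u)\bigr)$ is the indicator that $u$ is a nonzero square in $\mathbb{Z}_n^\ast$, and it vanishes on non-units, so
\begin{align*}
\mathcal{K}_4(G_n)=\frac{1}{4!\cdot 2^6}\sum_{x_1,x_2,x_3,x_4\in\mathbb{Z}_n}\ \prod_{1\le i<j\le 4}\bigl(\chi_0(x_i-x_j)+\chi(x_i-x_j)\bigr),
\end{align*}
where $4!\cdot 2^6=1536$. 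Expanding the product over the six edges of $K_4$ gives $\sum_{T}S_T$, indexed by subsets $T$ of the edge set, where $S_T=\sum\prod_{\{i,j\}\in T}\chi(x_i-x_j)\prod_{\{i,j\}\notin T}\chi_0(x_i-x_j)$. Since $\chi$ and $\chi_0$ both vanish off the units, every $S_T$ is supported on tuples whose four coordinates are pairwise distinct modulo $p$.

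The key structural simplification is that $(\mathbb{Z}/p^\alpha)^\ast$ is cyclic of order $p^{\alpha-1}(p-1)$ with $p^{\alpha-1}$ odd, so both $\chi$ and any quartic character $\psi$ are trivial on $1+p\mathbb{Z}_n$ and hence factor through reduction modulo $p$: for a unit $u$ one has $\chi(u)=\chi_p(\overline{u})$ and $\psi(u)=\psi_p(\overline{u})$, where $\chi_p,\psi_p$ are the quadratic and quartic characters of $\mathbb{F}_p^\ast$. Thus each summand of $S_T$ depends only on the residues $\overline{x_i}\in\mathbb{F}_p$, and each admissible residue tuple has exactly $(p^{\alpha-1})^4$ lifts, whence $S_T=p^{4\alpha-4}\Sigma_T$ with $\Sigma_T$ the same sum over distinct $\overline{x_i}\in\mathbb{F}_p$. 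This yields $\mathcal{K}_4(G_{p^\alpha})=p^{4(\alpha-1)}\mathcal{K}_4(G_p)$, reducing everything to the classical mod-$p$ computation (indeed $G_p$ is the ordinary Paley graph). The same reduction applied to the Jacobi sum gives $J(\psi,\chi)=p^{\alpha-1}J(\psi_p,\chi_p)$, so it suffices to establish the $\alpha=1$ case and then multiply back.

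For the mod-$p$ sums I would first discard every $T$ of odd cardinality: replacing each $\overline{x_i}$ by $c\,\overline{x_i}$ for a fixed nonresidue $c$ permutes the admissible tuples and multiplies $\Sigma_T$ by $\chi_p(c)^{|T|}=(-1)^{|T|}$, forcing $\Sigma_T=0$ for odd $|T|$. This leaves the empty graph, the $2$-edge graphs (a matching or a cherry), the $4$-edge graphs (a $4$-cycle or a paw), and $K_4$. The empty graph gives the count $p(p-1)(p-2)(p-3)$ of ordered distinct tuples; the remaining forest- and cycle-type sums are evaluated by the elementary identity $\sum_{t}\chi_p(t-u)\chi_p(t-v)=-1$ for $u\ne v$, corrected by inclusion--exclusion for the distinctness constraints carried by the absent edges, and together these supply the polynomial part, which assembles into $(p-9)^2-2p$ once the common factor $p(p-1)$ (from the translation and scaling symmetries) is removed. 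The genuine arithmetic content is concentrated in $K_4$: normalizing a triangle by translation and scaling turns $\Sigma_{K_4}$ into $p(p-1)$ times a double character sum of the shape $\sum_{x,y}\chi_p(x)\chi_p(1-x)\chi_p(y)\chi_p(1-y)\chi_p(x-y)$, which reduces, through Jacobsthal sums $\sum_x\chi_p(x)\chi_p(x^2-\beta)$, to the quartic Jacobi sum $J(\psi_p,\chi_p)$ and supplies the remaining term $J(\psi_p,\chi_p)^2+\overline{J(\psi_p,\chi_p)}^2$. I expect this evaluation of the double sum, together with the exact bookkeeping of the contributions and multiplicities of the three matchings, twelve cherries, twelve paws, three $4$-cycles and one $K_4$, to be the main obstacle. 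Once the cyclic contribution is identified, collecting all terms and dividing by $1536$ gives the $\alpha=1$ formula, and reinstating the factor $p^{4(\alpha-1)}$ together with $J(\psi,\chi)=p^{\alpha-1}J(\psi_p,\chi_p)$ recovers Theorem \ref{thm3}.
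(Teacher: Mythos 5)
This paper does not actually prove Theorem \ref{thm3}; it is quoted from \cite{BB} ``for completeness,'' where it was obtained by evaluating character sums directly over $\mathbb{Z}_{p^\alpha}$. Measured against that, your reduction framework is a genuinely different and attractive route, and the steps you do carry out are correct: the indicator identity produces the prefactor $1/(4!\cdot 2^6)=1/1536$; since $1+p\mathbb{Z}_n$ is the kernel of reduction $\mathbb{Z}_n^\ast\to\mathbb{Z}_p^\ast$ and has odd order $p^{\alpha-1}$, both $\chi$ and $\psi$ are trivial on it and factor through $\mathbb{F}_p^\ast$, while $\chi_0(x_i-x_j)$ also depends only on residues mod $p$; hence every summand is constant on residue classes and $\mathcal{K}_4(G_{p^\alpha})=p^{4\alpha-4}\mathcal{K}_4(G_p)$ and $J(\psi,\chi)=p^{\alpha-1}J(\psi_p,\chi_p)$. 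These two identities correctly reduce the theorem to $\alpha=1$, and as a by-product the second one, combined with $J(\psi_p,\chi_p)=\pm b\pm ai$ (\cite[Theorem 3.9]{berndt}), proves identity \eqref{xyreln} for all $\alpha$ --- exactly the question this paper leaves open in the remark following Table \ref{Table-1}. Your parity argument (scaling by a nonresidue kills odd $|T|$) and your census of the even edge-subgraphs of $K_4$ (three matchings, twelve cherries, twelve paws, three $4$-cycles, one $K_4$) are also correct.

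The genuine gap is that the entire arithmetic content of the theorem --- the $\alpha=1$ evaluation --- is asserted rather than proved. The inclusion--exclusion that assembles the forest- and cycle-type sums into the polynomial $(p-9)^2-2p$, and above all the evaluation of the $K_4$ sum through Jacobsthal sums into $J(\psi_p,\chi_p)^2+\overline{J(\psi_p,\chi_p)}^2$, is precisely where the formula comes from, and you explicitly defer it as ``the main obstacle.'' As written, your argument establishes only the reduction $\mathcal{K}_4(G_{p^\alpha})=p^{4\alpha-4}\mathcal{K}_4(G_p)$, not the stated formula. The gap is closable without redoing any hard computation: by \cite{evans1981number} one has $\mathcal{K}_4(G_p)=p(p-1)\{(p-9)^2-4a^2\}/1536$ with $p=a^2+b^2$, $a$ even, and by \cite[Theorem 3.9]{berndt} one has $J(\psi_p,\chi_p)^2+\overline{J(\psi_p,\chi_p)}^2=2(b^2-a^2)=2(p-2a^2)$, so that $(p-9)^2-2p+J(\psi_p,\chi_p)^2+\overline{J(\psi_p,\chi_p)}^2=(p-9)^2-4a^2$, which is the $\alpha=1$ case of the theorem; your lifting identities then give the general case. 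You should either insert these citations and this algebra, or actually carry out the double character sum evaluation you sketch; until one of these is done, the core of the proof is missing.
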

As a consequence of Theorem \ref{thm2}, we readily obtain the following formula for $\mathcal{K}_4(G_{p^\alpha})$ by taking $k=1$ which does not involve any Jacobi sums.
	\begin{corollary}\label{corr1}
		Let $p\equiv 1\pmod 4$ be a prime and let $\alpha$ be a positive integer. Let $p=a^2+b^2$ where $a$ and $b$ are integers such that $a$ is even. Then, the number of cliques of order four in $G_{p^\alpha}$ is given by
		\begin{align*}
			\mathcal{K}_4(G_{p^\alpha})=\dfrac{p^{4\alpha-3}(p-1)\{(p-9)^2-4a^2\}}{1536}.
		\end{align*}
	\end{corollary}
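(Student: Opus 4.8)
The corollary is the special case $k=1$ of Theorem~\ref{thm2}, so my plan is to carry out that specialization directly and then, as an independent check, reconcile the resulting Jacobi-free expression with the Jacobi-sum formula of Theorem~\ref{thm3}.

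First I would set $k=1$ in Theorem~\ref{thm2}. With a single prime the modulus becomes $n=p^\alpha$, the product $\prod_{i=1}^{k}$ collapses to its one factor indexed by $p_1=p$, $\alpha_1=\alpha$, $a_1=a$, and the prefactor simplifies as $\frac{1}{3\times 8^{2\cdot 1+1}}=\frac{1}{3\times 8^3}=\frac{1}{1536}$. This reproduces
\[
\mathcal{K}_4(G_{p^\alpha})=\frac{p^{4\alpha-3}(p-1)\{(p-9)^2-4a^2\}}{1536}
\]
verbatim, so the derivation of the corollary itself is a one-line substitution once Theorem~\ref{thm2} is available.

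The substantive content lies in seeing that this agrees with Theorem~\ref{thm3}. Equating the two expressions for $\mathcal{K}_4(G_{p^\alpha})$, cancelling the common factor $\frac{p^{2\alpha-1}(p-1)}{1536}$, and using $p^{4\alpha-3}=p^{2\alpha-1}\,p^{2\alpha-2}$, I would be reduced to the single identity
\[
J(\psi,\chi)^2+\overline{J(\psi,\chi)}^2=2\,p^{2\alpha-2}(p-2a^2).
\]
To establish it I would invoke the classical evaluation of the quartic--quadratic Jacobi sum over $\mathbb{F}_p$: its value is a Gaussian integer of norm $p$, and modulo $p^\alpha$ the sum is $p^{\alpha-1}$ times this prime-field value, which supplies the factor $p^{2\alpha-2}$. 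Writing the prime-field value as $u+vi$ with $u^2+v^2=p$, one gets $J^2+\overline{J}^2=2\,p^{2\alpha-2}(u^2-v^2)$, so the identity is equivalent to $u^2-v^2=p-2a^2=b^2-a^2$.

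The main obstacle is therefore not the corollary but pinning down the parity conventions in this last step. Since the paper normalizes $p=a^2+b^2$ with $a$ even (hence $b$ odd), I would need the complementary fact that the real part $u$ of the prime-field quartic--quadratic Jacobi sum is odd and its imaginary part $v$ even, so that $u^2=b^2$ and $v^2=a^2$ and hence $u^2-v^2=b^2-a^2$. (For instance, for $p=5=1^2+2^2$ one computes $J(\psi,\chi)=1+2i$, confirming $u=1=b$ and $v=2=a$.) Getting this sign right---rather than the opposite $a^2-b^2$---is exactly what makes the Jacobi-free formula of the corollary consistent with Theorem~\ref{thm3}.
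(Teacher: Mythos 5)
Your proof of the corollary is correct and is essentially the paper's own: the entire content is the substitution $k=1$ in Theorem~\ref{thm2}, with $3\times 8^{3}=1536$. Your supplementary reconciliation with Theorem~\ref{thm3} is not needed for the corollary, and note that the paper runs that comparison in the \emph{opposite} direction---it combines Theorem~\ref{thm3} with this corollary to derive the identity \eqref{xyreln}, and explicitly states it does not know whether that identity (in particular the $p^{\alpha-1}$-scaling of the Jacobi sum you invoke as ``classical'' for $\alpha>1$) already exists in the literature, so that step of your check would itself require proof.
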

Let $p$ be a prime such that $p\equiv 1\pmod{4}$ and $\alpha$  a positive integer. Let $\chi$ denote the unique quadratic Dirichlet character mod $p^{\alpha}$ and let $\psi$ be a Dirichlet character mod $p^{\alpha}$ of order $4$. Let $J(\psi,\chi)=x+iy$ be the Jacobi sum. Clearly, $x$ and $y$ are integers. Combining Theorem \ref{thm3} and Corollary \ref{corr1}, we obtain
\begin{align}\label{xyreln}
	J(\psi,\chi)^2+\overline{J(\psi,\chi)}^2=2(x^2-y^2)=2p^{2\alpha-2}(p-2a^2).
\end{align}
Note that there are two Dirichlet characters modulo $p^\alpha$ of order $4$, namely $\psi$ and $\overline{\psi}$, and $J(\overline{\psi},\chi)=\overline{J(\psi,\chi)}=x-iy$. So, \eqref{xyreln} is independent of the choice of the characters of order $4$. In Table \ref{Table-1}, we calculate $x$ and $y$ by using Python and verify \eqref{xyreln} for some particular values of $p$ and $\alpha$.
\begin{table}[ht]
	\begin{center}
		\begin{tabular}{|c|c|c|c|c|c|}
			\hline
			$p,a^2,p-2a^2$ & $\alpha$ & $x$ & $y$ & $x^2-y^2$ & $p^{2\alpha-2}$ \\
			\hline
			$p=5=2^2+1^2,$ & $2$ & $5$ & $10$ & $5^2\times (-3)$ & $5^2$ \\ \cline { 2 - 6 } $a^2=4, p-2a^2=-3$& $3$ & $25$ & $50$ & $5^4\times (-3)$ & $5^4$\\
			\hline
			$p=13=2^2+3^2,$ & $2$ & $-39$ & $26$ & $13^2\times 5$ & $13^2$\\
			\cline { 2 - 6 }$a^2=4, p-2a^2=5$ & $3$ & $-507$ & $338$ & $13^4\times 5$ & $13^4$\\ 
			\hline 
			$p=17=4^2+1^2,$ & $2$ & $-17$ & $68$ & $17^2\times (-15)$ & $17^2$\\
			\cline { 2 - 6 }$a^2=16, p-2a^2=-15$ & $3$ & $-289$ & $1156$ & $17^4\times (-15)$ & $17^4$\\  
			\hline
			$p=29=2^2+5^2,$ & $1$ & $5$ & $2$ & $21$ & $1$ \\\cline { 2 - 6 } $a^2=4,p-2a^2=21$& $2$ & $145$ & $58$ & $29^2\times 21$ & $29^2$\\
			\hline 
			$p=37=6^2+1^2,$ & $1$ & $1$ & $-6$ & $-35$ & $1$ \\ \cline { 2 - 6 } $a^2=36,p-2a^2=-35$& $2$ & $37$ & $-222$ & $37^2\times (-35)$ & $37^2$\\
			\hline
			$p=41=4^2+5^2,$ & $1$ & $-5$ & $4$ & $9$ & $1$ \\ \cline { 2 - 6 } $a^2=16,p-2a^2=9$& $2$ & $-205$ & $164$ & $41^2\times 9$ & $41^2$\\
			\hline
		\end{tabular}
		\caption{Numerical data for \eqref{xyreln}}
		\label{Table-1}
	\end{center}
\end{table}
\begin{remark}
Let $p\equiv 1\pmod 4$ be a prime such that $p=a^2+b^2$ with $a$ even. Let $\chi$ and $\psi$ be characters modulo $p$ of orders $2$ and $4$ respectively. Then for suitably chosen signs of $a$ and $b$, the Jacobi sum $J(\psi,\chi)=b+ai$. For example, see \cite[Theorem 3.9]{berndt}. Thus, the identity \eqref{xyreln} is known if $\alpha=1$. We do not know whether the identity \eqref{xyreln} already exists in the literature when $\alpha>1$.
\end{remark}
We also prove the following corollary which follows from Theorem \ref{thm2}.
\begin{corollary}\label{corr2}
	Let $n=p_{1}^{\alpha_{1}} \cdots p_{k}^{\alpha_{k}}$, where $\alpha_i\geq 1$ and the distinct primes $p_i$ satisfy $p_i\equiv 1\pmod{4}$ for all $i=1, \ldots, k$.  Then, $\mathcal{K}_4(G_n)=0$ if and only if $p_i\in\{5,13,17\}$ for some $i\in\{1,\ldots,k\}$.
\end{corollary}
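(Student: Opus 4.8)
The plan is to read everything off the closed formula in Theorem \ref{thm2}. Since
\begin{align*}
	\mathcal{K}_4(G_n)=\frac{1}{3\times 8^{2k+1}}\prod_{i=1}^k \left[p_i^{4\alpha_i-3}(p_i-1)\{(p_i-9)^2-4a_i^2\}\right],
\end{align*}
and the leading constant $\frac{1}{3\times 8^{2k+1}}$ is nonzero, the quantity $\mathcal{K}_4(G_n)$ vanishes if and only if the product vanishes. A product of real numbers is zero exactly when one of its factors is zero, so I first observe that $\mathcal{K}_4(G_n)=0$ if and only if one of the bracketed factors is zero. Because $p_i\geq 5$ gives $p_i^{4\alpha_i-3}>0$ and $p_i-1>0$, the $i$-th factor vanishes precisely when $(p_i-9)^2-4a_i^2=0$. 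Thus the entire statement reduces to the arithmetic question: for which primes $p\equiv 1\pmod 4$, written $p=a^2+b^2$ with $a$ even (so $a\geq 2$, since $a=0$ would force $p=b^2$), does $(p-9)^2=4a^2$ hold?

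The second step is to solve this condition. Writing $(p-9)^2=4a^2$ as $p-9=\pm 2a$, i.e. $p=9\pm 2a$, I would substitute the representation $p=a^2+b^2$ and complete the square, obtaining
\begin{align*}
	(a\mp 1)^2+b^2=10.
\end{align*}
Since $a$ is even, $a\mp 1$ is odd, and the only way to write $10$ as a sum of two integer squares is $10=1^2+3^2$; hence $\{|a\mp 1|,|b|\}=\{1,3\}$. Enumerating the finitely many possibilities subject to $a\geq 2$ even yields exactly three primes: from $p=9-2a$ one gets $a=2,\,b^2=1$, so $p=5$; from $p=9+2a$ one gets $a=2,\,b^2=9$, so $p=13$, and $a=4,\,b^2=1$, so $p=17$. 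Therefore $(p_i-9)^2-4a_i^2=0$ if and only if $p_i\in\{5,13,17\}$, and combining this with the first step gives $\mathcal{K}_4(G_n)=0$ if and only if $p_i\in\{5,13,17\}$ for some $i$.

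I do not expect a genuine obstacle here; the argument is a finite Diophantine check layered on top of Theorem \ref{thm2}. The only points requiring care are the parity/sign bookkeeping in the substitution (keeping the two cases $p=9\pm 2a$ separate and choosing $a>0$), the exhaustive enumeration of the representations of $10$ as a sum of two squares, and the exclusion of the spurious value $a=0$. As a sanity check, one can confirm against Table \ref{Table-2} that $\mathcal{K}_4(G_n)=0$ exactly for the entries whose prime factors lie in $\{5,13,17\}$, while $p_i=29,37$ give nonzero values, consistent with $(29-9)^2-4\cdot 4\neq 0$ and $(37-9)^2-4\cdot 36\neq 0$.
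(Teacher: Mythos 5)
Your proof is correct and takes essentially the same approach as the paper: both reduce, via Theorem \ref{thm2}, to the statement that $\mathcal{K}_4(G_n)=0$ if and only if $(p_i-9)^2-4a_i^2=0$ for some $i$, and then settle that condition by a finite Diophantine analysis. The only (cosmetic) difference is in the endgame: the paper substitutes $a_i^2=p_i-b_i^2$ and solves a quadratic in $p_i$, getting $p_i=11\pm 2\sqrt{10-b_i^2}\le 17$ and then verifying the candidate primes $5,13,17$, whereas you complete the square in $a$ to get $(a\mp 1)^2+b^2=10$ and enumerate the representations of $10$ as a sum of two squares, which yields both directions of the equivalence at once.
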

	The clique number of a graph is the order of a clique of maximum size contained in the graph. As a consequence of Theorem \ref{thm1} and Corollary \ref{corr2}, we readily obtain the following. 
	\begin{corollary}
			Let $n=p_{1}^{\alpha_{1}} \cdots p_{k}^{\alpha_{k}}$, where $\alpha_i\geq 1$ and the distinct primes $p_i$ satisfy $p_i\equiv 1\pmod{4}$ for all $i=1, \ldots, k$. We have:
	\begin{enumerate}
		\item if $p_i=5$ for some $i\in\{1,\ldots,k\}$ then the clique number of $G_n$ is $2$; and
		\item if $p_i\in\{13,17\}$ for some $i\in\{1,\ldots,k\}$ and $5\nmid n$ then the clique number of $G_n$ is $3$.
	\end{enumerate}	
\end{corollary}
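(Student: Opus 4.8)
The plan is to combine the explicit count $\mathcal{K}_3(G_n)$ from Theorem \ref{thm1} with the vanishing criterion for $\mathcal{K}_4(G_n)$ from Corollary \ref{corr2}, together with the elementary observation that every subset of the vertices of a clique again forms a clique, so a clique of order $m$ contains a clique of every order $\le m$. Consequently, if $\mathcal{K}_3(G_n)=0$ then $G_n$ has no clique of order $\ge 3$, while if $\mathcal{K}_4(G_n)=0$ then $G_n$ has no clique of order $\ge 4$. I would also record at the outset that $G_n$ always contains an edge: since $1=1^2$ is a square in $\mathbb{Z}_n^\ast$, the vertices $0$ and $1$ are adjacent, so the clique number is at least $2$; this supplies the lower bound needed in part (1).

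For part (1), suppose $p_i=5$ for some $i$. Then the factor $p_i-5$ occurring in the product of Theorem \ref{thm1} vanishes, which forces $\mathcal{K}_3(G_n)=0$. Hence $G_n$ has no triangle, and therefore no clique of order $\ge 3$; combined with the existence of an edge, the clique number is exactly $2$.

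For part (2), suppose $p_i\in\{13,17\}$ for some $i$ and $5\nmid n$. Since the least prime congruent to $1\pmod 4$ is $5$, the hypothesis $5\nmid n$ forces every $p_j\ge 13$, so each factor $p_j^{3\alpha_j-2}(p_j-1)(p_j-5)$ is strictly positive and thus $\mathcal{K}_3(G_n)>0$; in particular $G_n$ contains a triangle, so its clique number is at least $3$. On the other hand, because $p_i\in\{13,17\}$, Corollary \ref{corr2} gives $\mathcal{K}_4(G_n)=0$, so $G_n$ has no clique of order $\ge 4$ and its clique number is at most $3$. The two bounds together yield clique number exactly $3$.

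The argument is essentially a matter of tracking the signs of the factors in Theorem \ref{thm1} and invoking Corollary \ref{corr2}, so I do not anticipate a genuine obstacle; the only point needing care is the remark that $5$ is the smallest prime $\equiv 1\pmod 4$, which is what guarantees that excluding $5\mid n$ makes every factor in the formula for $\mathcal{K}_3(G_n)$ strictly positive and hence ensures the presence of a triangle in part (2).
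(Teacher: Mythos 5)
Your proposal is correct and follows exactly the route the paper intends: the paper offers no written proof, asserting the corollary follows ``readily'' from Theorem \ref{thm1} (whose factor $p_i-5$ vanishes precisely when some $p_i=5$, and whose factors are all strictly positive when $5\nmid n$) together with Corollary \ref{corr2} (which kills all $4$-cliques when some $p_i\in\{13,17\}$). Your write-up merely makes explicit the small details the paper leaves tacit --- that $0$ and $1$ are always adjacent, and that a clique of order $m$ contains cliques of every smaller order --- so there is nothing to correct.
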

\section{Preliminaries and some lemmas}
In this section, we prove two lemmas which will be required to prove the main results. For a positive integer $n$, let $\mathbb{Z}_n^\ast$ denote the group of units in the commutative ring $\mathbb{Z}_n$. Unless stated otherwise, $n$ denotes a positive integer such that $n=p_1^{\alpha_1}\cdots p_k^{\alpha_k}$, where $\alpha_i\geq 1$ and the distinct primes $p_i$ satisfy $p_i\equiv 1\pmod{4}$ for all $i=1, \ldots, k$. The following lemma states a relation between the squares in $\mathbb{Z}_n^\ast$ and the squares in $\mathbb{Z}_{p_i^{\alpha_i}}^\ast$, for $i\in\{1,\ldots,k\}$.
\begin{lemma}\label{lem1}
Let $n=p_1^{\alpha_1}\cdots p_k^{\alpha_k}$, where $\alpha_i\geq 1$ and the distinct primes $p_i$ satisfy $p_i\equiv 1\pmod{4}$ for all $i=1, \ldots, k$. Let $\mathbb{Z}_n^\ast$ denote the group of units in $\mathbb{Z}_n$, and let $R_n$ and $R_{p_i^{\alpha_i}}$ $(1\leq i\leq k)$ denote the group of squares in $\mathbb{Z}_n^\ast$ and $\mathbb{Z}_{p_i^{\alpha_i}}^\ast$ respectively. Then, 
for $x\in\mathbb{Z}_n$ we find that
\begin{align*}
	x\in R_n\text{ if and only if }x\in \bigcap\limits_{i=1}^k R_{p_i^{\alpha_i}}.
\end{align*} 
\end{lemma}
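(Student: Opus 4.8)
The plan is to exploit the Chinese Remainder Theorem ring isomorphism $\mathbb{Z}_n \cong \prod_{i=1}^k \mathbb{Z}_{p_i^{\alpha_i}}$, which restricts to a group isomorphism $\mathbb{Z}_n^\ast \cong \prod_{i=1}^k \mathbb{Z}_{p_i^{\alpha_i}}^\ast$. First I would fix the isomorphism $\phi\colon x \mapsto (x_1,\ldots,x_k)$, where $x_i$ denotes the image of $x$ under reduction modulo $p_i^{\alpha_i}$. Since squaring is a componentwise operation under a ring isomorphism, the key observation is that $x$ is a square in $\mathbb{Z}_n^\ast$ if and only if $\phi(x)$ is a square in the product group, and an element of a direct product is a square exactly when each of its coordinates is a square. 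Thus $x \in R_n$ if and only if $x_i \in R_{p_i^{\alpha_i}}$ for every $i$.

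Next I would translate the coordinatewise condition back into the language of the statement. Writing $x \in \bigcap_{i=1}^k R_{p_i^{\alpha_i}}$ requires care, since strictly speaking $R_{p_i^{\alpha_i}}$ is a subset of $\mathbb{Z}_{p_i^{\alpha_i}}^\ast$, not of $\mathbb{Z}_n^\ast$; the intended reading is that $x$, viewed modulo each $p_i^{\alpha_i}$, lands in the corresponding square group. I would make this identification explicit by noting that the condition ``$x_i \in R_{p_i^{\alpha_i}}$ for all $i$'' is precisely what the intersection notation encodes once each $R_{p_i^{\alpha_i}}$ is pulled back to $\mathbb{Z}_n$ via the reduction map. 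With this convention the equivalence follows immediately from the product decomposition.

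The only genuine content to verify is the elementary group-theoretic fact that an element of a finite direct product of abelian groups is a square if and only if each coordinate is a square. This is a one-line argument: if $(y_1,\ldots,y_k) = (z_1,\ldots,z_k)^2 = (z_1^2,\ldots,z_k^2)$ then $y_i = z_i^2$ for each $i$, and conversely if $y_i = z_i^2$ for each $i$ then $(y_1,\ldots,y_k) = (z_1,\ldots,z_k)^2$. I expect the main (and essentially only) subtlety to be bookkeeping rather than mathematics, namely ensuring that the reduction maps, the CRT isomorphism, and the squaring operation are all compatible so that ``square in $\mathbb{Z}_n^\ast$'' and ``square in each factor'' line up cleanly; once the isomorphism is set up, no character sums or residue computations are needed and the proof is short.
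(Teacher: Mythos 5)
Your proposal is correct and takes essentially the same approach as the paper: both arguments rest on the Chinese Remainder Theorem, and your observation that a tuple in the product group is a square exactly when each coordinate is a square, transported back through the CRT isomorphism, is precisely the paper's step of lifting the square roots $a_i$ to a single integer $z$ with $z\equiv a_i\pmod{p_i^{\alpha_i}}$ and concluding $x\equiv z^2\pmod n$. The paper merely writes this in explicit congruence language rather than via the ring isomorphism, and (like you) implicitly reads $x\in R_{p_i^{\alpha_i}}$ as a statement about $x$ reduced modulo $p_i^{\alpha_i}$.
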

\begin{proof}
	Let $x\in\mathbb{Z}_n$ be such that $x\in R_n$. Then $x\equiv a^2\pmod n$ for some $a\in\mathbb{Z}_n^\ast$, which yields $x\equiv a^2\pmod {p_i^{\alpha_i}}$ for each $i\in\{1,\ldots,k\}$. Conversely, let $x\equiv a_1^2\pmod {p_1^{\alpha_1}},\ldots,x\equiv a_k^2\pmod {p_k^{\alpha_k}}$ for $a_i\in\mathbb{Z}_{p_i^{\alpha_i}}^\ast$. Let $z$ be an integer satisfying the system of congruences
	$$z\equiv a_1\pmod {p_1^{\alpha_1}},\ldots,z\equiv a_k\pmod {p_k^{\alpha_k}}.$$
	Then by Chinese Remainder Theorem we find that $x\equiv z^2\pmod n$ and so, $x\in R_n$. This completes the proof of the lemma.
\end{proof}
Before proving our next lemma, we state one from \cite{BB} which we use to prove our lemma.
\begin{lemma}[Lemma 2.6 in \cite{BB}]\label{lemBB}
	Let $p\equiv 1\pmod 4$ be a prime and let $\alpha\geq 1$ be an integer. We have ${\phi(n)/2\choose i} p^i\equiv 0\pmod{p^\alpha}$ where $n=p^\alpha,1\leq i\leq {\alpha-1}$. 
\end{lemma}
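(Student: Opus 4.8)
The plan is to recast the stated congruence as a lower bound on a $p$-adic valuation. Set $m:=\phi(n)/2=p^{\alpha-1}(p-1)/2$, and let $v_p(\cdot)$ denote the $p$-adic valuation. The assertion $\binom{m}{i}p^{i}\equiv 0\pmod{p^{\alpha}}$ is then equivalent to
$$v_p\!\left(\tbinom{m}{i}\right)\ \ge\ \alpha-i .$$
(For $\alpha=1$ the index range $1\le i\le \alpha-1$ is empty, so I assume $\alpha\ge 2$.) The first thing I would record is that $v_p(m)=\alpha-1$: indeed $m=p^{\alpha-1}\cdot\frac{p-1}{2}$, and since $p$ is an odd prime the factor $(p-1)/2$ is a genuine integer coprime to $p$, so no extra powers of $p$ are contributed.

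The key step is the elementary binomial identity $i\binom{m}{i}=m\binom{m-1}{i-1}$, valid for every $i\ge 1$. Applying $v_p$ to both sides and using that $\binom{m-1}{i-1}$ is a nonnegative integer, whence $v_p\!\left(\binom{m-1}{i-1}\right)\ge 0$, I obtain
$$v_p\!\left(\tbinom{m}{i}\right)=v_p(m)+v_p\!\left(\tbinom{m-1}{i-1}\right)-v_p(i)\ \ge\ (\alpha-1)-v_p(i).$$
It then remains to bound $v_p(i)$. Since $p^{v_p(i)}$ divides $i$ we have $p^{v_p(i)}\le i$, and combining this with $p\ge 2$ and the elementary inequality $t+1\le 2^{t}$ gives $v_p(i)+1\le 2^{v_p(i)}\le p^{v_p(i)}\le i$, i.e. $v_p(i)\le i-1$. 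Substituting this in yields $v_p\!\left(\binom{m}{i}\right)\ge(\alpha-1)-(i-1)=\alpha-i$, and therefore $v_p\!\left(\binom{m}{i}p^{i}\right)\ge\alpha$, which is exactly the claim.

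I do not expect a serious obstacle here; the whole argument hinges on the single clean identity $i\binom{m}{i}=m\binom{m-1}{i-1}$ together with the two valuation facts $v_p(m)=\alpha-1$ and $v_p(i)\le i-1$. The alternative route would be to expand $\binom{m}{i}=\frac{1}{i!}\prod_{j=0}^{i-1}(m-j)$ and compute the valuation of the numerator term by term; there the one delicate point is to check that $v_p(m-j)=v_p(j)$ for each $1\le j\le i-1$, which requires verifying $v_p(j)<v_p(m)=\alpha-1$ (this holds because $j\le i-1\le\alpha-2<p^{\alpha-1}$). That bookkeeping is the only mildly fiddly part of the direct approach, and it is precisely what the identity $i\binom{m}{i}=m\binom{m-1}{i-1}$ lets me sidestep, so I would favour the valuation-identity argument.
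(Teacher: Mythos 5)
Your proof is correct, including the observation that the case $\alpha=1$ is vacuous, but it follows a genuinely different route from the one used in the source. This paper imports the lemma from \cite{BB} rather than reproving it; however, the intended method is visible in the proof of Lemma \ref{lem2}, which the authors say proceeds ``along similar lines as in the proof of Lemma \ref{lemBB}'': one expands the binomial coefficient as the quotient $\frac{m(m-1)\cdots(m-i+1)}{i!}$, pulls the full power of $p$ out of the leading factor $m$, and controls the denominator via Legendre's digit-sum formula, $v_p(i!)=\frac{i-\sigma_p(i)}{p-1}\leq i-1$. You instead use the absorption identity $i\binom{m}{i}=m\binom{m-1}{i-1}$: integrality of $\binom{m-1}{i-1}$ together with $v_p(m)=\alpha-1$ gives $v_p\bigl(\tbinom{m}{i}\bigr)\geq(\alpha-1)-v_p(i)$ in one line, and the trivial bound $v_p(i)\leq i-1$ finishes. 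Your argument is shorter and more elementary: it needs no factorial valuations, no digit sums, and no term-by-term analysis of the factors $m-j$, which is exactly the bookkeeping (checking $v_p(m-j)=v_p(j)$, i.e.\ $v_p(j)<\alpha-1$) that you correctly flag as the delicate point of the direct expansion. What the expansion method buys the authors is uniformity: the same digit-sum computation is reused verbatim inside the proof of Lemma \ref{lem2} to show $p^{\alpha-l}\mid\binom{p^{\alpha-1}}{l}$. Note that your identity would dispatch that step just as quickly, since with $M=p^{\alpha-1}$ it gives $v_p\bigl(\tbinom{M}{l}\bigr)\geq v_p(M)-v_p(l)\geq(\alpha-1)-(l-1)=\alpha-l$.
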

For primes $p\equiv 1\pmod 4$ and positive integers $\alpha$, the following lemma lists the squares in $\mathbb{Z}_{p^\alpha}^\ast$ in terms of the squares in $\mathbb{Z}_p^\ast$.
\begin{lemma}\label{lem2}
Let $p$	be a prime satisfying $p\equiv 1\pmod 4$, and let $\alpha$ be a positive integer. Let $R_p$ denote the set of nonzero squares in $\mathbb{Z}_p$, say $R_p:=(\mathbb{Z}_p^\ast)^2=\{r_1,\ldots,r_{\frac{p-1}{2}}\}$. Then the set of units in $\mathbb{Z}_{p^\alpha}$ which are squares is given by $$R_{p^\alpha}=\bigcup\limits_{i=1}^{\frac{p-1}{2}}\{r_i+tp:t\text{ is an integer satisfying }0\leq t\leq p^{\alpha-1}-1\}.$$
\end{lemma}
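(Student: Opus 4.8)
The plan is to prove the set equality by first establishing a clean reduction criterion: a unit $y\in\mathbb{Z}_{p^\alpha}^\ast$ is a square if and only if its reduction $y\bmod p$ is a nonzero square in $\mathbb{Z}_p$. Granting this, the union on the right is exactly the set of units whose reduction modulo $p$ lands in $R_p=\{r_1,\dots,r_{(p-1)/2}\}$: for each fixed $r_i$ the residues $r_i+tp$ with $0\le t\le p^{\alpha-1}-1$ are precisely the $p^{\alpha-1}$ elements of $\mathbb{Z}_{p^\alpha}$ that reduce to $r_i$ modulo $p$, and each is a unit because $r_i\not\equiv 0\pmod p$. Hence that union coincides with $\{y\in\mathbb{Z}_{p^\alpha}^\ast:\ y\bmod p\in R_p\}$, which by the criterion equals $R_{p^\alpha}$, proving the lemma.

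For the easy inclusion, if $y\equiv a^2\pmod{p^\alpha}$ with $a$ a unit, then reducing modulo $p$ shows $y\bmod p$ is a nonzero square, so $R_{p^\alpha}$ is contained in the union. The substantive direction is to show that every $y=r_i+tp$ is a square in $\mathbb{Z}_{p^\alpha}^\ast$. Here I would invoke the standard quadratic criterion in the cyclic group $\mathbb{Z}_{p^\alpha}^\ast$ of even order $\phi=\phi(p^\alpha)=p^{\alpha-1}(p-1)$: a unit $y$ is a square if and only if $y^{\phi/2}\equiv 1\pmod{p^\alpha}$. Expanding by the binomial theorem,
\[
(r_i+tp)^{\phi/2}=\sum_{j=0}^{\phi/2}\binom{\phi/2}{j}r_i^{\,\phi/2-j}\,t^{j}p^{j}.
\]
For $j\ge\alpha$ the factor $p^{j}$ is divisible by $p^\alpha$, while for $1\le j\le\alpha-1$ Lemma \ref{lemBB} gives $\binom{\phi/2}{j}p^{j}\equiv 0\pmod{p^\alpha}$; hence every term with $j\ge1$ vanishes modulo $p^\alpha$, and $(r_i+tp)^{\phi/2}\equiv r_i^{\,\phi/2}\pmod{p^\alpha}$.

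It then remains to check $r_i^{\,\phi/2}\equiv 1\pmod{p^\alpha}$. Since $r_i\in R_p$, Euler's criterion gives $r_i^{(p-1)/2}\equiv 1\pmod p$, so $r_i^{(p-1)/2}=1+cp$ for some integer $c$; raising to the power $p^{\alpha-1}$ and using the standard fact that $(1+cp)^{p^{\alpha-1}}\equiv 1\pmod{p^\alpha}$ (equivalently, that the principal units $1+p\mathbb{Z}_{p^\alpha}$ form a group of order $p^{\alpha-1}$) yields $r_i^{\,\phi/2}=(1+cp)^{p^{\alpha-1}}\equiv 1\pmod{p^\alpha}$. Combining the two displays gives $(r_i+tp)^{\phi/2}\equiv 1\pmod{p^\alpha}$, so each $r_i+tp$ is a square, which establishes the nontrivial inclusion.

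The main obstacle is the binomial reduction step, and this is exactly where Lemma \ref{lemBB} does the essential work: it is what guarantees that, although $\phi/2$ is large, all the mixed terms collapse modulo $p^\alpha$, so that raising $r_i+tp$ to the power $\phi/2$ only sees the reduction $r_i$ modulo $p$. An alternative, slightly less self-contained route would bypass the explicit expansion entirely: the reduction map $\mathbb{Z}_{p^\alpha}^\ast\to\mathbb{Z}_p^\ast$ is a surjective homomorphism of cyclic groups of even order, hence sends a generator to a generator and therefore carries squares exactly onto squares, which yields the reduction criterion directly. I would nonetheless favor the argument through Lemma \ref{lemBB}, since it keeps the proof elementary and uses only the tools already developed in this section.
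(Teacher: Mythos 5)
Your proposal is correct and takes essentially the same route as the paper's proof: the square criterion $y^{\phi(p^\alpha)/2}\equiv 1\pmod{p^\alpha}$ in the cyclic group $\mathbb{Z}_{p^\alpha}^\ast$, the binomial expansion collapsed by Lemma \ref{lemBB}, the reduction of $r_i^{\phi(p^\alpha)/2}$ to Euler's criterion modulo $p$, and the same easy converse inclusion. The only divergence is one sub-step: where you cite the standard fact that the principal units $1+p\mathbb{Z}_{p^\alpha}$ form a subgroup of order $p^{\alpha-1}$ (so $(1+cp)^{p^{\alpha-1}}\equiv 1\pmod{p^\alpha}$ by Lagrange), the paper proves this by hand via the valuation bound $p^{\alpha-l}\mid\binom{p^{\alpha-1}}{l}$, a cosmetic rather than substantive difference.
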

\begin{proof}
Consider $r_i\in R_p$ for some $i\in\{1,\ldots,\frac{p-1}{2}\}$, and $0\leq t\leq p^{\alpha-1}-1$. We show that $(r_i+tp)^{\frac{\phi(p^\alpha)}{2}}\equiv 1\pmod {p^\alpha}$, where $\phi$ denotes the Euler totient function. By the binomial theorem, we find that
\begin{align*}
 (r_i+tp)^{\frac{\phi(p^\alpha)}{2}}=\sum\limits_{m=0}^{\frac{\phi(p^\alpha)}{2}}\binom{\frac{\phi(p^\alpha)}{2}}{m}r_i^{\frac{\phi(p^\alpha)}{2}-m}(tp)^m.	
\end{align*}
Then, using Lemma \ref{lemBB} we have $(r_i+tp)^{\frac{\phi(p^\alpha)}{2}}\equiv r_i^{\frac{\phi(p^\alpha)}{2}}\pmod{p^\alpha}$. So, we proceed to show that $r_i^{\frac{\phi(p^\alpha)}{2}}\equiv 1\pmod {p^\alpha}$. Now, $r_i\in R_p$ implies $r_i^{\frac{p-1}{2}}\equiv 1\pmod p$, so we have $r_i^{\frac{p-1}{2}}=1+pX$ for some integer $X$. Again applying the binomial theorem, we find that
\begin{align*}
	(r_i^{\frac{p-1}{2}})^{p^{\alpha-1}}=(1+pX)^{p^{\alpha-1}}=\sum\limits_{l=0}^{p^{\alpha-1}}\binom{p^{\alpha-1}}{l}p^{l} X^{l}.
\end{align*}
So, it suffices to show that if $1\leq l\leq \alpha-1$ then $p^{\alpha-l}\mid \binom{p^{\alpha-1}}{l}$. We proceed along similar lines as in the proof of Lemma \ref{lemBB}. Let $v_p(y)$ denote the highest power of $p$ dividing $y$ and let $\sigma_p(y)$ denote the sum of digits of the base-$p$ representation of $y$. We have
\begin{align*}
	{\binom{p^{\alpha-1}}{l}}&=\dfrac{p^{\alpha-1}(p^{\alpha-1}-1)\cdots (p^{\alpha-1}-l+1)}{l!}\\
	&=\dfrac{p^{\alpha-l}p^{l-1}(p^{\alpha-1}-1)\cdots (p^{\alpha-1}-l+1)}{l!}.
\end{align*}
If possible, let $p^l\mid l!$. Then $v_p(l!)\geq l$, which implies $\frac{l-\sigma_p(l)}{p-1}\geq l$, which is not possible. So, $p^{\alpha-l}\mid \binom{p^{\alpha-1}}{l}$ and hence $r_i^{\frac{\phi(p^\alpha)}{2}}\equiv 1\pmod {p^\alpha}$. Thus, we have proved that $(r_i+tp)^{\frac{\phi(p^\alpha)}{2}}\equiv 1\pmod {p^\alpha}$.  Since $\mathbb{Z}_{p^\alpha}^\ast$ is cyclic of order $\phi(p^\alpha)$, this implies $r_i+tp\in R_{p^\alpha}$. Conversely, let $z\in R_{p^\alpha}$. Then $z\equiv x^2\pmod {p^\alpha}$ for some unit $x\in\mathbb{Z}_{p^\alpha}$, which implies $z\equiv x^2\pmod p$. Thus, $z\equiv r_i\pmod p$ for some $i\in\{1,\ldots,\frac{p-1}{2}\}$ and our proof is complete.
\end{proof}
\begin{remark}\label{rk}
	Let $n=p_{1}^{\alpha_{1}} \cdots p_{k}^{\alpha_{k}}$, where $\alpha_i\geq 1$ and the distinct primes $p_i$ satisfy $p_i\equiv 1\pmod{4}$ for all $i=1, \ldots, k$. Let $x\in\mathbb{Z}_n$. Then by Lemmas \ref{lem1} and \ref{lem2}, we have $$x\in R_n\text{ if and only if } x\pmod {p_i}\in R_{p_i} \text{ for }i=1,\ldots,k.$$
	This yields
	$$|R_n|=\prod\limits_{i=1}^{k}\left( p_i^{\alpha_i-1}\times\frac{p_i-1}{2}\right)=\frac{\phi(n)}{2^k} .$$
\end{remark}
\section{Proofs of Theorem \ref{thm1} and Theorem \ref{thm2}}
In this section, we prove our main results. We recall that $n=p_1^{\alpha_1}\cdots p_k^{\alpha_k}$, where $\alpha_i\geq 1$ and the distinct primes $p_i$ satisfy $p_i\equiv 1\pmod{4}$ for all $i=1, \ldots, k$. Let $R_n$ denote the group of squares in $\mathbb{Z}_n^\ast$ and let $H_n$ denote the subgraph of $G_n$ induced by $R_n$. As observed in Proposition 3.6 in \cite{BB}, $G_n$ is vertex-transitive. Let $r\in R_n$ be fixed. Then, the map $x\mapsto rx$ on $R_n$ gives a graph isomorphism on $H_n$. This proves that $H_n$ is also vertex-transitive. 
\par For a graph $G$ and a vertex $a\in G$, let $\mathcal{K}_l(G,a)$ denote the number of cliques of order $l$ in $G$ containing $a$. Now, let $l\geq 3$. Since $G_n$ is vertex-transitive, we find that
\begin{align}\label{eq1}
\mathcal{K}_l(G_n)&=\frac{n}{l}\times \mathcal{K}_l(G_n,0)\notag\\	
&=\frac{n}{l}\times \mathcal{K}_{l-1}(H_n).
\end{align}
Since $H_n$ is also vertex-transitive, we have
\begin{align}\label{eq2}
	\mathcal{K}_{l-1}(H_n)=\frac{|R_n|}{l-1}\times \mathcal{K}_{l-1}(H_n,1).
\end{align}
Combining \eqref{eq1} and \eqref{eq2}, we obtain
\begin{align}\label{eq3}
	\mathcal{K}_l(G_n)=\frac{n\phi(n)}{2^k l(l-1)}\times \mathcal{K}_{l-1}(H_n,1).
\end{align}
We are now ready to prove our first main result.
\begin{proof}[Proof of Theorem \ref{thm1}]
	Employing \eqref{eq3}, we find that the number of triangles in $G_n$ is given by
	\begin{align}\label{eq4}
	\mathcal{K}_3(G_n)=\frac{n\phi(n)}{3\times 2^{k+1}}\times \mathcal{K}_2(H_n,1).
	\end{align}
Let $E$ denote the set of edges in $H_n$ containing the vertex $1$, and for $i\in\{1,\ldots,k\}$ let $E_i$ denote the set of edges in $H_{p_i^{\alpha_i}}$ containing the vertex $1$. Let $(1,x)$ denote the edge connecting the vertices $1$ and $x$. We define a function $f:E\rightarrow E_1\times\cdots\times E_k$ as follows.
\begin{align*}
	(1,x)&\mapsto ((1\pmod{p_1^{\alpha_1}},x\pmod{p_1^{\alpha_1}}),\ldots,(1\pmod{p_k^{\alpha_k}},x\pmod{p_k^{\alpha_k}})).
\end{align*}
 $f$ is well defined, and using Chinese Remainder Theorem and Lemma \ref{lem1} we find that $f$ is one-one and onto as well. Thus, $f$ is a bijection which implies $|E|=\prod\limits_{i=1}^k |E_i|$. So, \eqref{eq4} yields
\begin{align}\label{eq5}
	\mathcal{K}_3(G_n)=\frac{n\phi(n)}{3\times 2^{k+1}}\times \prod\limits_{i=1}^k \mathcal{K}_2(H_{p_i^{\alpha_i}},1).
\end{align}
Now, in (4.15) of \cite{BB}, $\mathcal{K}_2(H_{p_i^{\alpha_i}},1)$ is expressed in terms of character sums. It was computed therein and it equals $\frac{p_i^{\alpha_i-1}(p_i-5)}{4}$. Hence, substituting this expression in \eqref{eq5} yields the required result.
\end{proof}
We now prove a lemma which will be used to prove Theorem \ref{thm2}.
The lemma counts the number of triangles of a particular kind in the graph $G_{p^\alpha}$ in terms of the number of triangles in $G_p$.
\begin{lemma}\label{lem4}
	Let $p\equiv 1\pmod 4$ be a prime and let $\alpha$ be a positive integer. Let $G_{p^\alpha}$ denote the Paley-type graph of order $p^\alpha$ and let $H_{p^\alpha}$ denote its subgraph induced by the set of units which are squares in $\mathbb{Z}_{p^\alpha}$. Similarly, let $G_{p}$ denote the Paley graph of order $p$ and let $H_{p}$ denote its subgraph induced by the nonzero squares in $\mathbb{Z}_{p}$. Let $p=a^2+b^2$, where $a$ and $b$ are integers such that $a$ is even. Then, the number of cliques of order three in $H_{p^\alpha}$ containing the vertex $1$ is given by
	$$\mathcal{K}_3(H_{p^\alpha},1)=p^{2\alpha-2}\times \mathcal{K}_3(H_{p},1)=p^{2\alpha-2}\times\left( \dfrac{(p-9)^2-4a^2}{64}\right) .$$
\end{lemma}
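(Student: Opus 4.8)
The plan is to prove the two asserted equalities separately. The equality $\mathcal{K}_3(H_{p^\alpha},1)=p^{2\alpha-2}\,\mathcal{K}_3(H_p,1)$ is the new combinatorial step and rests on the reduction-mod-$p$ dictionary furnished by Remark \ref{rk}, while the evaluation $\mathcal{K}_3(H_p,1)=\frac{(p-9)^2-4a^2}{64}$ is the classical count of triangles through a vertex in the prime Paley graph, which I will recover from results already stated above.

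First I would describe a triangle through $1$ in $H_{p^\alpha}$ concretely: it is an unordered pair $\{x,y\}$ with $x\neq y$ such that $x,\,y,\,1-x,\,1-y,\,x-y$ all lie in $R_{p^\alpha}$. The map I would analyse is reduction modulo $p$, $\{x,y\}\mapsto\{\bar x,\bar y\}$ with $\bar x=x\bmod p$. By Remark \ref{rk}, each membership condition $z\in R_{p^\alpha}$ is equivalent to $\bar z\in R_p$, so the image $\{\bar x,\bar y\}$ satisfies the defining conditions of a triangle through $1$ in $H_p$. Moreover no degeneracy can occur: since $x-y\in R_{p^\alpha}$ is in particular a unit, $\bar x\neq\bar y$, and since $1-x,1-y$ are units, $\bar x,\bar y\neq 1$. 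Hence the reduction map is well defined from triangles through $1$ in $H_{p^\alpha}$ to triangles through $1$ in $H_p$.

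The heart of the argument is the fiber count, which I expect to be the main (if mild) obstacle. Fix a triangle $\{\bar x,\bar y\}$ through $1$ in $H_p$. By Lemma \ref{lem2}, $\bar x$ admits exactly $p^{\alpha-1}$ lifts inside $R_{p^\alpha}$, namely $\bar x+tp$ for $0\le t\le p^{\alpha-1}-1$, and likewise for $\bar y$. The crucial observation, again via Remark \ref{rk}, is that once $x,y$ are chosen as such lifts, the remaining conditions $1-x,1-y,x-y\in R_{p^\alpha}$ hold automatically, because their residues $1-\bar x,1-\bar y,\bar x-\bar y$ already lie in $R_p$; thus lifting is entirely unconstrained. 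Because $\bar x\neq\bar y$, independent choices of the two lifts produce distinct unordered pairs with no overcounting, so each fiber has exactly $p^{\alpha-1}\cdot p^{\alpha-1}=p^{2\alpha-2}$ elements. Summing over all $\mathcal{K}_3(H_p,1)$ triangles in $H_p$ gives the first equality. The delicate point to get right is precisely that every one of the five defining conditions is simultaneously a unit condition and a quadratic-residue condition, so that descent and lift are both unobstructed and no coincidences among $1,\bar x,\bar y$ are possible.

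For the second equality I would relate $\mathcal{K}_3(H_p,1)$ to the number of order-four cliques in $G_p$. Applying \eqref{eq3} with $n=p$ (so $k=1$ and $l=4$) gives $\mathcal{K}_4(G_p)=\frac{p(p-1)}{24}\,\mathcal{K}_3(H_p,1)$. On the other hand, specialising Theorem \ref{thm3} to $\alpha=1$ and using the classical evaluation $J(\psi,\chi)=b+ai$ of the Jacobi sum modulo $p$ (the Remark following Theorem \ref{thm3}), one finds $J(\psi,\chi)^2+\overline{J(\psi,\chi)}^2=2(b^2-a^2)=2p-4a^2$, whence $\mathcal{K}_4(G_p)=\frac{p(p-1)\{(p-9)^2-4a^2\}}{1536}$. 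Equating the two expressions yields $\mathcal{K}_3(H_p,1)=\frac{(p-9)^2-4a^2}{64}$. Alternatively, one may bypass Jacobi sums and obtain the same value directly by writing the indicator of $R_p$ as $\tfrac12(1+\chi(\cdot))$ for the quadratic character $\chi$ and expanding the defining sum into $2^5$ character sums; all but a few vanish or are elementary, and the dependence on $a$ emerges from the single Jacobsthal-type double sum $\sum_{x,y}\chi(x)\chi(y)\chi(1-x)\chi(1-y)\chi(x-y)$, whose evaluation in terms of $p=a^2+b^2$ is the one nontrivial classical computation on that route (cf.\ \cite{evans1981number}).
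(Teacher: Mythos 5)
Your proof is correct, and its core --- the first equality --- is the same argument as the paper's: reduce a triangle $(1,x,y)$ of $H_{p^\alpha}$ modulo $p$, observe via Lemma \ref{lem2} (equivalently Remark \ref{rk}) that membership in $R_{p^\alpha}$ is detected entirely by the residue mod $p$, so that lifting each of $\bar x,\bar y$ through its $p^{\alpha-1}$ preimages in $R_{p^\alpha}$ is unconstrained, and note that $\bar x\neq\bar y$ rules out any collision of unordered pairs, giving fibers of size exactly $p^{2\alpha-2}$. Where you genuinely diverge is the evaluation $\mathcal{K}_3(H_p,1)=\frac{(p-9)^2-4a^2}{64}$: the paper disposes of this in one line by citing Proposition 4 of \cite{evans1981number}, whereas you back-solve for $\mathcal{K}_3(H_p,1)$ by equating two expressions for $\mathcal{K}_4(G_p)$ --- one from \eqref{eq3} with $n=p$, $l=4$, giving $\frac{p(p-1)}{24}\mathcal{K}_3(H_p,1)$, and one from Theorem \ref{thm3} at $\alpha=1$ combined with the classical evaluation $J(\psi,\chi)=b+ai$ of \cite{berndt}, giving $\frac{p(p-1)\{(p-9)^2-4a^2\}}{1536}$. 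The arithmetic checks out ($1536/24=64$, and $J^2+\overline{J}^2=2(b^2-a^2)=2p-4a^2$ cancels the $-2p$ term), and there is no circularity since Theorem \ref{thm3} is a prior result of \cite{BB} and the $\alpha=1$ Jacobi evaluation is classical, not the identity \eqref{xyreln} derived later in the paper. The trade-off: the paper's citation keeps the proof purely combinatorial modulo one external counting result, while your route stays within results already quoted in the paper but leans on the character-sum theorem from \cite{BB}; your sketched alternative (expanding the indicator $\tfrac12(1+\chi(\cdot))$ and evaluating the Jacobsthal-type sum) is essentially how \cite{evans1981number} proves the cited proposition in the first place.
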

\begin{proof}
	Let $(1,x,y)$ denote a triangle in $H_{p^\alpha}$ containing the vertex $1$, where $x,y\in\mathbb{Z}_{p^\alpha}$. Then, $x,y,x-1,y-1,x-y\in R_{p^\alpha}$ and so, $(1\pmod p, x\pmod p, y\pmod p)$ yields a triangle in $H_p$ containing the vertex $1$. Conversely, let $(1,c,d)$ denote a triangle in $H_p$ containing the vertex $1$, whereby $c,d,c-1,d-1,c-d\in R_p$. Employing Lemma \ref{lem2}, we find that $(1,c+tp,d+t'p)$ yields a triangle in $H_{p^\alpha}$ containing the vertex $1$, where $t,t'$ are integers satisfying $0\leq t,t'\leq p^{\alpha-1}-1$. Each such triangle constructed in $H_{p^\alpha}$ is unique: if $(1,c+tp,d+t'p)$ and $(1,d+t''p,c+t'''p)$ yield the same triangle ($0\leq t,t',t'',t'''\leq p^{\alpha-1}-1$), this implies $c\equiv d\pmod p$ which is a contradiction. Thus, the triangle $(1,c,d)$ in $H_p$ produces $(p^{\alpha-1})^2$ distinct triangles in $H_{p^\alpha}$ containing the vertex $1$. We have proved the first equality in the statement of the lemma. The second equality follows from Proposition 4 in \cite{evans1981number}. This completes the proof of the lemma.
\end{proof}
\begin{proof}[Proof of Theorem \ref{thm2}]
	We proceed as in the proof of Theorem \ref{thm1} and employ \eqref{eq3}. We have
	\begin{align}\label{eq6}
		\mathcal{K}_4(G_n)=\frac{n\phi(n)}{3\times 2^{k+2}}\times \mathcal{K}_3(H_n,1).
	\end{align}
Let $F$ denote the set of triangles in $H_n$ containing the vertex $1$, and for $i\in\{1,\ldots,k\}$ let $F_i$ denote the set of triangles in $H_{p_i^{\alpha_i}}$ containing the vertex $1$. Let us set the notation $(1,x,y)$ to denote the triangle with the vertices $1,x$ and $y$. We define the following map:
\begin{align*}
	g:F&\rightarrow F_1\times\cdots\times F_k\\
	(1,x,y)&\mapsto ((1\pmod{p_1^{\alpha_1}},x\pmod{p_1^{\alpha_1}},y\pmod{p_1^{\alpha_1}}),\ldots,\\
	&\hspace{.65cm}(1\pmod{p_k^{\alpha_k}},x\pmod{p_k^{\alpha_k}},y\pmod{p_k^{\alpha_k}})).
\end{align*}
$g$ is a well defined function. Next, to show that $g$ is onto we seek for a pre-image of an element in $F_1\times\cdots\times F_k$, say $j:=((1,c_1,d_1),\ldots,(1,c_k,d_k))$. We find $x,y\in\mathbb{Z}_n$ such that $(1,x,y)\in F$. We have a solution for $x$ and $y$ if they satisfy the following system of congruences:
\begin{align}\label{eq7}
	&x\equiv c_1\pmod{p_1^{\alpha_1}},\ldots,x\equiv c_k\pmod{p_k^{\alpha_k}},\notag\\
	&y\equiv d_1\pmod{p_1^{\alpha_1}},\ldots,y\equiv d_k\pmod{p_k^{\alpha_k}}.
\end{align}
Let us denote this system as $\left( \begin{array}{ccc}
	c_1 & \ldots & c_k\\ d_1 & \ldots & d_k
\end{array}\right) $. Note that interchanging the elements in any column of $\left( \begin{array}{ccc}
c_1 & \ldots & c_k\\ d_1 & \ldots & d_k
\end{array}\right) $ and finding the solution to the corresponding system of congruences also gives another pre-image of $j$ in $F$. However, $(1,c_i,d_i)$ and $(1,d_i,c_i)$ denote the same triangle, so the system $\left( \begin{array}{ccc}
d_1 & \ldots & d_k\\ c_1 & \ldots & c_k
\end{array}\right) $ gives the same triangle in $H_{p^\alpha}$ as \eqref{eq7} does. Hence, each element in $F_1\times\cdots\times F_k$ has $\frac{2^k}{2}=2^{k-1}$ pre-images in $F$, and therefore \eqref{eq6} yields
\begin{align}\label{eq8}
	\mathcal{K}_4(G_n)=\frac{n\phi(n)}{3\times 2^{k+2}}\times 2^{k-1}\times \prod\limits_{i=1}^k \mathcal{K}_3(H_{p_i^{\alpha_i}},1).
\end{align}
 Finally, employing Lemma \ref{lem4} to substitute $\mathcal{K}_3(H_{p_i^{\alpha_i}},1)$ for $i\in\{1,\ldots,k\}$ in \eqref{eq8} completes the proof of the theorem.
\end{proof}
We now give a proof of Corollary \ref{corr2} as a consequence of Theorem \ref{thm2}.
\begin{proof}[Proof of Corollary \ref{corr2}]
	From Theorem \ref{thm2} we find that $\mathcal{K}_4(G_n)=0$ if and only if $(p_i-9)^2-4 a_i^2=0$ for some $i\in\{1,\ldots,k\}$, where $p_i=a_i^2+b_i^2$ for integers $a_i$ and $b_i$ such that $a_i$ is even. So, we have $(p_i-9)^2-4 a_i^2=0$ if and only if $(p_i-9)^2-4(p_i-b_i^2)=0$. Solving this equation as a quadratic equation in $p_i$, we have $p_i^2-22p_i+(81+4 b_i^2)=0$, and readily obtain $p_i=11\pm 2\sqrt{10-b_i^2}$. Then we have the inequality $p_i\leq 11+2\sqrt{10}\approx 17.3245\ldots$ and so, $p_i\leq 17$. The only primes $p\equiv 1\pmod 4$ satisfying $p\leq 17$ are $5,13$ and $17$. Finally, we finish the proof by observing that for $p=13$ and $p=17$, if $p=a^2+b^2$ for integers $a$ and $b$ such that $a$ is even, then $(p-9)^2-4a^2=0$.
\end{proof}


\begin{thebibliography}{99}

\bibitem{ananchuen2006cubic}
W. Ananchuen and L. Caccetta, {\it Cubic and quadruple Paley graphs with the ne. c. property}, Discrete Mathematics 306 (2006), 2954--2961.

\bibitem{ananchuen1993adjacency}
W. Ananchuen and L. Caccetta, {\it On the adjacency properties of Paley graphs}, Networks 23 (1993), 227--236.

\bibitem{berndt}
B. C. Berndt and R. J. Evans, {\it Sums of Gauss, Jacobi, and Jacobsthal}, Journal of Number Theory 11 (1979), no. 3, 349--398.

\bibitem{BB}
A. Bhowmik and R. Barman, {\it On a Paley-type graph on $\mathbb{Z}_n$}, Graphs and Combinatorics 38 (2022), no. 2, Paper No. 41, 25 pp.


\bibitem{chung}
F. R. K. Chung, R. L. Graham, and R. M. Wilson, {\it Quasi-random graphs}, Combinatorica 9 (1989), no. 4, 345--362.

\bibitem{angsu}
A. Das, {\it Paley-type graphs of order a product of two distinct primes}, Algebra and Discrete Mathematics 28, no. 1 (2019).
	
\bibitem{dawsey}
M. L. Dawsey and D. McCarthy, {\it Generalized Paley graphs and their complete subgraphs of orders three and four},  Research in the Mathematical Sciences 8 (2021), no. 2, Paper No. 18,  23 pp.
	
\bibitem{erdos}
P. Erd\H{o}s and A. R\'{e}nyi, {\it Asymmetric graphs}, Acta Mathematica Academiae Scientiarum Hungaricae 14 (1963), 295–-315.
	
\bibitem{evans1981number}
R. J. Evans, J. R. Pulham and J. Sheehan, {\it On the number of complete subgraphs contained in certain graphs}, Journal of Combinatorial Theory, Series B 30 (1981), no. 3, 364--371.
	
%\bibitem{jones2017paley}
%G. A. Jones, {\it Paley and the Paley graphs}, arXiv preprint arXiv:1702.00285 (2017).
	
\bibitem{lim2006generalised}
T. K. Lim and C. E. Praeger, {\it On generalised Paley graphs and their automorphism groups}, Michigan Mathematical Journal 58 (2009), 293--308. 
	
\bibitem{reis}
L. Reis, {\it Paley-like graphs over finite fields from vector spaces}, arXiv preprint arXiv:2210.03236 (2022).
		
\bibitem{sachs}
H. Sachs, {\it Uber selbstkomplementäre graphen}, Publ. Math. Debrecen 9 (1962), 270--288.
		
\bibitem{wage2006character}
N. Wage, {\it Character sums and Ramsey properties of generalized Paley graphs}, Integers 6 (2006), article no. 18.
		
\end{thebibliography}
\end{document}